\def\'#1{\ifx#1i{\accent"13 \i}\else{\accent"13 #1}\fi}
\def\P{{\rm I\!P}}
\def\L{{\rm I\!L}}
\def\l{{\rm I\!l}}
\newtheorem{theorem}{Theorem}
\newtheorem{lemma}{Lemma}
\theoremstyle{definition}
\title{Achromatic arboricity on complete graphs
\thanks{Research supported by PAPIIT-M\'exico IN108121 and CONACyT-M\'exico 282280.}}
\author{Gabriela Araujo-Pardo \footnotemark[3] \\ \url{garaujo@matem.unam.mx}
\and Christian Rubio-Montiel \footnotemark[2] \\ \url{christian.rubio@acatlan.unam.mx}}
\begin{document}
\maketitle

\def\thefootnote{\fnsymbol{footnote}}
\footnotetext[3]{Instituto de Matem{\' a}ticas, Universidad Nacional Aut{\' o}noma de M{\' e}xico, 04510 M{\' e}xico City, Mexico.}
\footnotetext[2]{Divisi{\' o}n de Matem{\' a}ticas e Ingenier{\' i}a, FES Acatl{\' a}n, Universidad Nacional Aut{\'o}noma de M{\' e}xico, 53150 Naucalpan, Mexico.}

\begin{abstract} 
In this paper we study the {\it {achromatic arboricity}} of the complete graph. This parameter arises from the arboricity of a graph as the achromatic index arises from the chromatic index. The achromatic arboricity of a graph $G$, denoted by $A_{\alpha}(G)$,  is the maximum number of colors that can be used to color the edges of $G$ such that every color class induces a forest but any two color classes contain a cycle. In particular, if $G$ is a complete graph we prove that \[\frac{1}{4}n^{\frac{3}{2}}-\Theta(n) \leq A_{\alpha}(G)\leq \frac{1}{\sqrt{2}}n^{\frac{3}{2}}-\Theta(n).\]
\end{abstract}


\section{Introduction}

Let $G$ be a finite simple graph. A $k$-\emph{coloring} of $G$ is a surjective function $\varsigma$ that assigns a number from the set $\{1,2,\dots,k\}$ to each vertex of $G$ such that any two adjacent vertices have different colors. A $k$-coloring $\varsigma$ is called \emph{complete} if for each pair of different colors $i,j\in \{1,2,\dots,k\}$ there exists an edge $xy\in E(G)$ such that $\varsigma(x)=i$ and $\varsigma(y)=j$. 

While the \emph{chromatic number $\chi(G)$ of $G$} is defined as the smallest number $k$ for which there exists a $k$-coloring of $G$, the \emph{achromatic number $\alpha(G)$ of $G$} is defined as the largest number $k$ for which there exists a complete $k$-coloring of $G$ (see \cite{MR272662}). Note that any $\chi(G)$-coloring of $G$ is also complete. Therefore, for any graph $G$ 
\[\chi(G)\leq \alpha(G).\]

In \cite{FHKS19} the authors introduce the parameter called $a$-vertex arboricity of a graph $G$, denoted as $ava(G)$, defined as the largest number of colors that can be used to color the vertices of $G$ such that every color induces a forest but merging any two yields a monochromatic cycle, this parameter arises from the vertex arboricity parameter, denoted by $va(G)$, which is defined as the minimal number of induced forests which cover all the vertices (see \cite{MR236049}), and clearly as a minimal decomposition of trees is complete, we have that 
\[va(G)\leq ava(G).\]

Inspired in these parameters and in our previous work related to edge complete colorings, most specifically with the achromatic (proper colorings), the pseudoachromatic (non proper colorings) and pseudoconnected  (connected and no proper colorings) indices of the complete graphs  \cite{MR3249588,MR3774452,MR2778722,MR3695270}; we define the \emph{achromatic arboricity} parameter for a graph $G$, denoted by $A_{\alpha}(G)$,  as the largest number of colors that can be used to color the edges of $G$ such that each color class is acyclic and any pair of color classes induces a subgraph with at least a cycle. 

Clearly, this parameter arises from the well-known arboricity parameter of a graph $G$, defined by Nash-William in 1961 \cite{MR133253,MR161333}, and denoted for $A(G)$, that is the minimum number of acyclic subgraphs into which $E(G)$ can be partitioned. Note that the union of two parts induced a subgraph with at least a cycle. 
In consequence, we have that 
\[A(G)\leq A_{\alpha(G)}.\]

In this paper we give a lower and an upper bound for the achromatic arboricity parameter with a small gap between them, more precisely,  we prove that $A_{\alpha}(K_n)\approx \frac{1}{c}n^{\frac{3}{2}}$ for $\sqrt{2}\leq c \leq 4$. 
 
This paper is organized as follows.  On Section \ref{s2} we give a a general upper bound. On Section \ref{s3} we give a lower bound using the properties and the structure of the finite projective planes.  On Section \ref{s4}, we prove our main theorem as a consequence of the previous results.  Finally,  on Section \ref{smallvalues} we give the exact values of the achromatic arboricity for $K_n$ when $2\leq n \leq 7$.

\section{The upper bound for the achromatic arboricity of $K_n$}\label{s2}

In this section we prove an upper bound for $A_{\alpha}(K_{n})$. The technique has been used previously by different authors in different papers cited in the introduction of this paper. 

\begin{lemma}\label{lema1}
If $n\geq 5$ then \[A_{\alpha}(K_{n})\leq\left\lfloor \max\left\{ \min\{f_n(x),g_n(x)\} \colon x\in\mathbb{N}\right\}\right\rfloor\] where $f_n(x)=n(n-1)/2x$ and $g_n(x)=x(n-x-1)+1$.
\end{lemma}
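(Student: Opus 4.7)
The plan is to fix an achromatic--arboric coloring of $K_n$ attaining $k = A_{\alpha}(K_n)$, and to single out a color class $C_0$ of minimum cardinality with $|C_0| = x$ edges. From the pair $(C_0, x)$ I will derive two separate upper bounds on $k$ and combine them.

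The first bound is pure pigeonhole: since every one of the $k$ color classes has at least $x$ edges and the classes partition $E(K_n)$, we have $kx \le \binom{n}{2}$, whence $k \le n(n-1)/(2x) = f_n(x)$. The second bound is structural. Because $C_0$ is a forest with $x$ edges, $|V(C_0)| \ge x+1$, and the set $W := V(K_n) \setminus V(C_0)$ satisfies $|W| \le n - x - 1$. For every $i \ne 0$ the hypothesis provides a cycle $\gamma_i \subseteq C_0 \cup C_i$ that uses at least one edge of each. The goal is to build an injection from $\{C_i : i \ne 0\}$ into a set of size at most $x(n-x-1)$, yielding $k-1 \le x(n-x-1)$ and hence $k \le g_n(x)$, where the ``$+1$'' absorbs $C_0$ itself. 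For the injection I would pick a minimum cycle $\gamma_i$ and assign to it a canonical pair $(e_i, \omega_i)$ with $e_i \in C_0 \cap \gamma_i$ and $\omega_i$ a canonical companion drawn from a set of size at most $n-x-1$: a vertex of $V(\gamma_i) \cap W$ when the cycle reaches outside $V(C_0)$, and a chord edge of $C_i$ inside $V(C_0)$ otherwise. Injectivity follows because in either scenario the witness pins down at least one edge of $C_i$, and the color classes are edge-disjoint.

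Combining the two bounds, $k \le \min\{f_n(x), g_n(x)\}$ for the specific $x$ determined by the coloring; since $x$ is an unknown positive integer, one concludes $k \le \max_{x \in \mathbb{N}} \min\{f_n(x), g_n(x)\}$, and the floor is taken because $k \in \mathbb{N}$. The main obstacle is the witness construction: one must uniformly treat the chord-cycle case alongside cycles that reach $W$, canonicalize the choice of $\gamma_i$ when it is not unique, and verify that the estimate $|W| \le n-x-1$ survives the possibility that $C_0$ is a forest with several components (where $|V(C_0)|$ can grow to $2x$, but then $|W| \le n - 2x \le n - x - 1$ still holds). The hypothesis $n \ge 5$ rules out degenerate small cases where the witness argument would otherwise collapse.
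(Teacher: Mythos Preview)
Your first bound $k\le f_n(x)$ via pigeonhole is fine and matches the paper. The gap is in the injection for the second bound. When $\omega_i\in W$ is a vertex, the pair $(e_i,\omega_i)$ does \emph{not} pin down an edge of $C_i$: writing $e_i=uv$, a minimal cycle $\gamma_i$ through $e_i$ that visits $\omega_i$ may leave $V(C_0)$ from $u$, from $v$, or from some third vertex of $V(C_0)$ altogether, and a different class $C_j$ can produce the identical witness $(e_i,\omega_i)$ via a different route. If you patch this by recording instead the actual $C_i$-edge that first leaves $V(C_0)$ (or the chord, in the internal case), you do obtain a genuine injection --- but into the set of non-$C_0$ edges meeting $V(C_0)$, which for $|V(C_0)|=2x$ has size $\binom{2x}{2}+2x(n-2x)-x=2x(n-x-1)$. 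That yields only $k-1\le 2x(n-x-1)$, weaker than $g_n(x)$ by a factor of~$2$ in the main term.

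The missing idea is that a cycle in $C_0\cup C_i$ forces at least \emph{two} incidences between $C_i$ and $V(C_0)$ (either two distinct $C_i$-edges with an endpoint in $V(C_0)$, or a single chord with both endpoints there). The paper exploits this directly: it takes $C_0$ to be a matching on $2x$ vertices, splits the non-$C_0$ edges touching $V(C_0)$ into the $2x(n-2x)$ edges of $X$ leaving $V(C_0)$ and the $2x(x-1)$ internal chords, and uses that each $C_i$ consumes at least two of these, giving $2(k-1)\le 2x(n-2x)+2x(x-1)=2x(n-x-1)$ and hence $k\le g_n(x)$. A single-witness injection cannot see this saving; the natural fix is a double-count of edges meeting $V(C_0)$ rather than an injection.
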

\begin{proof}
Let $\varsigma\colon E(K_n) \rightarrow [k]$ be an acyclic $k$-edge-coloring of $K_n$ such that the union of any two color classes induce at least a cycle, with $k=A_{\alpha}(K_n)$.  Let $x= min\{ \left|\varsigma^{-1}(i)\right| : i\in \left[k\right]\}$, i.e., $x$ is the cardinality of the smallest color class of $\varsigma$. Without loss of generality, let $x= \left|\varsigma^{-1}(k)\right|$ be. Since $\varsigma$ defines a partition in the edges of $K_n$ it follows that $k\leq f_n(x):=n(n-1)/2x$. 

Then, since $\varsigma$ is acyclic, we can suppose that $\varsigma^{-1}(k) $ induces a matching. Moreover, the number of pair of edges in the subgraph induced by $E(K_{2x})\setminus \varsigma^{-1}(k)$ is at most $\frac{\binom{2x}{2}-x}{2}=x(x-1)$. On the other hand, there are $2x(n-2x)$ edges incident to some vertex of $\varsigma^{-1}(k)$ exactly once, we denote this set of edges by	 $X$. Since every two color classes of $\varsigma$ have at least two incidences, there are at least two edges that have a vertex in common with some edge in $\varsigma^{-1}(k)$, hence, the number of color classes incident to $\varsigma^{-1}(k)$ containing some edge in $X$ is at most $x(n-2x)$ and the number of color classes incident to $\varsigma^{-1}(k)$ containing no edge in $X$ is at most $x(x-1)$ since two edges are required to obtain a cycle. Hence, there are at most $g_n(x)-1$ color classes incident with some edge in $\varsigma^{-1}(k)$ where \[g_n(x)-1:=x(n-2x)+x(x-1),\]
therefore \[g_n(x)-1=xn-x^2-x,\]
it follows that $k\leq g_n(x)=xn-x^2-x+1$. In consequence, we have  \[A_{\alpha}(K_{n})\leq\min\{f_n(x),g_n(x)\}.\]
And we conclude that
\[A_{\alpha}(K_{n})\leq\left\lfloor \max\left\{ \min\{f_n(x),g_n(x)\} \colon x\in\mathbb{N}\right\}\right\rfloor.\]
\end{proof}

The function $f_n$ is a hyperbola and the function $g_n$ is a parabola, see Figure \ref{Fig1}. Then we have the following lemma.

\begin{figure}[!htbp]
\begin{center}
\includegraphics{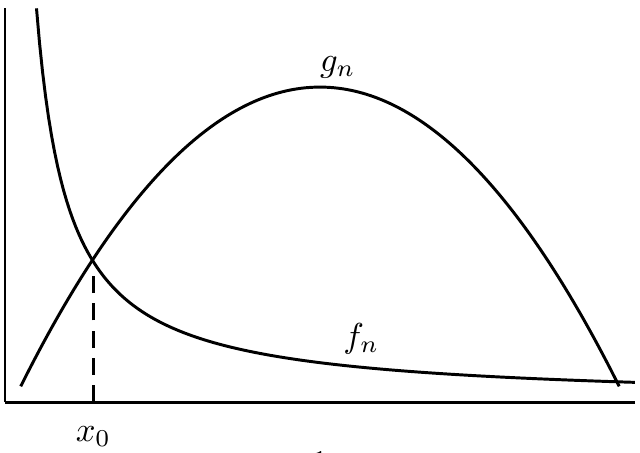}
\caption{The functions $g_n$ and $f_n$ for a fixed value $n$.}\label{Fig1}
\end{center}
\end{figure}

\begin{lemma}\label{lema2}
Let $x_0,x_1\in \mathbb{R}^{+}$, such that $f_n(x_0)=g_n(x_0)$, and $f_n(x_1)=g_n(x_1)$. If $x_0<x_1$ then \[g_n(x_0)=f_n(x_0)=\max\left\{ \min\{f_n(x),g_n(x)\} \colon x\in\mathbb{R}^{+}\right\}\]
where $x_0=\sqrt{(n+5/8)/2+\epsilon}+1/4$ and $\epsilon>0$.
\end{lemma}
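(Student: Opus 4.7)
The plan is to split the lemma into two assertions: (i) the identity $f_n(x_0)=g_n(x_0)=\max\{\min\{f_n(x),g_n(x)\}\colon x\in\mathbb{R}^{+}\}$, and (ii) the expression $x_0=\sqrt{(n+5/8)/2+\epsilon}+1/4$ for some $\epsilon>0$. For (i), I would first clear the denominator in $g_n-f_n$ by multiplying by $2x>0$, obtaining the cubic $h(x):=-2x^{3}+2(n-1)x^{2}+2x-n(n-1)$ whose sign agrees with that of $g_n-f_n$ on $(0,\infty)$. Since the leading coefficient is negative, $h(0)=-n(n-1)<0$, and $h(x)\to-\infty$, the function $h$ has at most two positive real roots; by hypothesis these are $x_0<x_1$, forcing the sign pattern $h<0$ on $(0,x_0)$, $h>0$ on $(x_0,x_1)$, and $h<0$ on $(x_1,\infty)$. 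Hence $\min\{f_n,g_n\}=g_n$ on $(0,x_0]\cup[x_1,\infty)$ and $\min\{f_n,g_n\}=f_n$ on $[x_0,x_1]$.

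Evaluating $h$ at the vertex of the parabola $g_n$ gives $h((n-1)/2)=(n-1)^{2}(n-5)/4\geq 0$ for $n\geq 5$, which places $(n-1)/2$ in $[x_0,x_1]$. This tells me $g_n$ is non-decreasing on $(0,x_0]$ and non-increasing on $[x_1,\infty)$, so the maximum of $\min\{f_n,g_n\}$ restricted to each of the three intervals is attained at an endpoint in $\{x_0,x_1\}$; since $f_n$ is strictly decreasing, $f_n(x_0)>f_n(x_1)$, and so the global maximum is $f_n(x_0)=g_n(x_0)$, proving (i).

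For (ii), I would test the explicit candidate $x^{*}:=\sqrt{(n+5/8)/2}+1/4$. Squaring its definition gives the key identity $(x^{*})^{2}=x^{*}/2+n/2+1/4$, which allows every polynomial in $x^{*}$ to be reduced to a linear expression. The central computation is the evaluation $2x^{*}\bigl(f_n(x^{*})-g_n(x^{*})\bigr)=n(n-1)-2(n-1)(x^{*})^{2}+2(x^{*})^{3}-2x^{*}$; after substituting the identity above (and consequently $(x^{*})^{3}=x^{*}\cdot(x^{*})^{2}$), the coefficient of $x^{*}$ should cancel and the constant residue should collapse to exactly $3/4$, yielding $f_n(x^{*})-g_n(x^{*})=3/(8x^{*})>0$. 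Thus $x^{*}$ lies in one of the two regions $(0,x_0)\cup(x_1,\infty)$ where $g_n<f_n$. To rule out the second region I would verify $x^{*}<(n-1)/2$, which reduces to $n^{2}-5n+1>0$ and holds for $n\geq 5$; combined with $x_1\geq(n-1)/2$ from (i) this forces $x^{*}<x_0$. Setting $\epsilon:=(x_0-1/4)^{2}-(n+5/8)/2$, the strict inequality $x^{*}<x_0$ is equivalent to $\epsilon>0$, yielding the stated form of $x_0$.

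The main obstacle I anticipate is the algebraic cancellation in the central computation of (ii): it is the vanishing of all $x^{*}$-linear terms and the clean residue $3/4$ that drives the bound, and while no new idea is required, the bookkeeping must be done carefully. Everything else in the argument is qualitative sign analysis of the cubic $h$ and of the two monotonicity regimes of $g_n$.
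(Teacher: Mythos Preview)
Your argument is correct. For part (i) your approach is essentially the paper's (sign pattern of $g_n-f_n$ on $\mathbb{R}^+$ together with the monotonicity of $f_n$), though you add the justification that the vertex $(n-1)/2$ of $g_n$ lies in $[x_0,x_1]$, which the paper tacitly assumes when it passes from $f_n(x_0)>f_n(x_1)$ to the global maximum.

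For part (ii) you take a genuinely different route. The paper treats the equation $f_n(x_0)=g_n(x_0)$ as a quadratic in $n$, approximates the discriminant via $\sqrt{D}=2x_0^{2}-2x_0-2-\epsilon$ (the exact gap being $(2x_0^{2}-2x_0-2)^{2}-D=3$), solves for $n$, and then inverts to recover the expression for $x_0$. You instead substitute the candidate $x^{*}=\sqrt{(n+5/8)/2}+1/4$ directly into $2x(f_n-g_n)$ and reduce via the identity $(x^{*})^{2}=x^{*}/2+n/2+1/4$; the $x^{*}$-linear terms do indeed cancel and the residue is exactly $3/4$, so $f_n(x^{*})-g_n(x^{*})=3/(8x^{*})>0$ and hence $x^{*}<x_0$ once you rule out $x^{*}>x_1$ via $x^{*}<(n-1)/2\le x_1$. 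Your approach is more elementary (no discriminant estimate) and makes explicit why the smaller intersection $x_0$, rather than $x_1$, is being captured; the paper's approach has the virtue of \emph{deriving} the form of $x_0$ rather than verifying a guess, but is less careful about selecting the correct branch of the quadratic in $n$.
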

\begin{proof}
Note that $f_n(x) \leq g_n(x)$ for $x_0 \leq x \leq x_1$ and $g_n(x)<f_n(x)$ in any other case for $x \in \mathbb{R}^{+}$. Since $f_n(x_0)>f_n(x_1)$ it follows that $g_n(x_0)=f_n(x_0)=\max\left\{ \min\{f_n(x),g_n(x)\} \colon x\in\mathbb{R}^{+}\right\}.$

The equation $f_n(x_0)=g_n(x_0)$ is reduced to $n^2-(2x_0^2+1)n+2x_0(x^2_0+x_0-1)=0$. 
Since the discriminant $D=4x_0^4-8x_0^3-4x_0^2+8x_0+1$, $\sqrt{D}=2x_0^2-2x_0-2-\epsilon$, for some $\epsilon>0$. Then the positive solution for $n$ is $n=2x_0^2-x_0-\frac{1}{2}-\frac{\epsilon}{2}$ and the lemma holds true because $x_0=\sqrt{(n+5/8)/2+\frac{\epsilon}{4}}+1/4$ provides the positive solution. 
\end{proof}

Now, we can prove the upper bound in the following theorem.

\begin{theorem}\label{teo1}
Let $n\geq 5$ be an integer then the achromatic arboricity of the complete graph of order $n$ is upper bounding by:
\[A_{\alpha}(K_{n})\leq \frac{1}{\sqrt{2}}n^{\frac{3}{2}}-\Theta(n).\]
\end{theorem}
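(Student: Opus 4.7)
The plan is to combine Lemma \ref{lema1} with the explicit maximizer identified in Lemma \ref{lema2}, and then extract the asymptotics. By Lemma \ref{lema1}, $A_{\alpha}(K_n)\leq\lfloor \max_x \min\{f_n(x),g_n(x)\}\rfloor$, and Lemma \ref{lema2} pins this maximum down to $f_n(x_0)=n(n-1)/(2x_0)$ at $x_0=\sqrt{(n+5/8)/2+\epsilon/4}+\tfrac14$. Thus the theorem reduces to the purely asymptotic statement $f_n(x_0)=\tfrac{1}{\sqrt{2}}\,n^{3/2}-\Theta(n)$.

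I would carry this out in two routine steps. First, Taylor-expand the square root at $n/2$: since the additive constant $5/16+\epsilon/4$ is bounded, one gets $x_0=\sqrt{n/2}+\tfrac14+O(1/\sqrt{n})$, and therefore $2x_0=\sqrt{2n}+\tfrac12+O(1/\sqrt{n})$. Second, apply the standard expansion $\tfrac{1}{a+b}=\tfrac{1}{a}-\tfrac{b}{a^{2}}+O(b^{2}/a^{3})$ with $a=\sqrt{2n}$ and $b=\tfrac12+O(1/\sqrt{n})$ to obtain $1/(2x_0)=1/\sqrt{2n}-1/(4n)+O(n^{-3/2})$. Multiplying by $n(n-1)$ and collecting terms (the $\tfrac{n^{3/2}}{\sqrt{2}}$ leading contribution, the $-\tfrac{\sqrt{n}}{\sqrt{2}}$ from the factor $n-1$, and the $-\tfrac{n-1}{4}$ from the $-1/(4n)$ correction) yields
\[
f_n(x_0)=\frac{1}{\sqrt{2}}\,n^{3/2}-\frac{n}{4}+O(\sqrt{n}),
\]
which is exactly the upper bound claimed.

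The only delicate point is verifying that the subleading term is genuinely of order $n$, so that the gap is truly $\Theta(n)$ and not smaller. This hinges on the nonzero additive constant $\tfrac12$ appearing in $2x_0=\sqrt{2n}+\tfrac12+\cdots$: after multiplication by $n(n-1)$ it produces the coefficient $-\tfrac14$ at the $n$-scale, and neither the bounded $\epsilon$ of Lemma \ref{lema2} nor the $O(1/\sqrt{n})$ tail of $x_0$ can cancel it, since both contribute only to the $O(\sqrt{n})$ remainder. Once this is confirmed, the theorem follows from the chain $A_{\alpha}(K_n)\leq \lfloor f_n(x_0)\rfloor$.
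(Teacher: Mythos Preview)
Your argument is correct and follows essentially the same route as the paper: invoke Lemmas~\ref{lema1} and~\ref{lema2} and then expand the value at $x_0\approx\sqrt{n/2}$ asymptotically. The only cosmetic difference is that you expand $f_n(x_0)=n(n-1)/(2x_0)$ whereas the paper expands $g_n(x_0)=nx_0-x_0^2-x_0+1$; since $f_n(x_0)=g_n(x_0)$ these are equivalent, and in fact your version makes the coefficient $-\tfrac14$ at the $n$-scale explicit, while in the paper it is hidden in the combination $-\tfrac{n}{2}+n\epsilon$ with $\epsilon\to\tfrac14$.
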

\begin{proof}
By Lemma \ref{lema2},  $g_n(x_0)=nx_0-x_0^2-x_0+1$ and $x_0=\sqrt{n/2}+\epsilon$, for a small $\epsilon>0$. We obtain: 
\[g_n(x_0)=n(\sqrt{n/2}+\epsilon)-(\sqrt{n/2}+\epsilon)^2-(\sqrt{n/2}+\epsilon)+1,\]
and then $g_n(x_0)= \frac{1}{\sqrt{2}}n^{\frac{3}{2}}- \frac{n}{2}-(2\epsilon+1)\sqrt{\frac{n}{2}}+n\epsilon-\epsilon^2-\epsilon+1$ and the result follows.
\end{proof}

\section{A lower bound for the achromatic arboricity of $K_n$  for some values of $n$}\label{s3}
In this section we provide a lower bound for the achromatic arboricity of $K_n$, for some values $n$ greater than $13$. We use the well-known technique of identifying the structure of the finite projective plane with the complete graph in order to use its properties. 

First that all, we recall some definitions and properties of the projective planes that we will use along the proof for the lower bound. 

A \emph{projective plane} is a set of $n$ points and a set of $n$ lines, with the following properties.
\begin{enumerate} \item For any two distinct points there is exactly one line incident to both.
\item For any two distinct lines there is exactly one point incident to both.
\item There exist four points such that no line is incident to three of them or more.
\end{enumerate}

A projective plane has $n=q^2+q+1$ points, for a suitable $q$ number, and $n$ lines. Each line has $q+1$ points and each point belongs to $q+1$ lines; we say that $q$ is the \emph{order} of such plane, and a projective plane of order $q$ is denoted by $\Pi_q$. 

Let $\P$ be the set of points of $\Pi_q$ and let $\L= \{\l_1,\dots, \l_n\}$ the set of lines of $\Pi_q$. We identify the points of $\Pi_q$ with the set of vertices of the complete graph $K_n$.  Then, the set of points of each line of $\Pi_q$ induces a subgraph $K_{q+1}$ in $K_n$. Given a line $\l_i\in \L$, let $l_i=(V(l_i),E(l_i))$ be the subgraph of $K_n$ induced by the set of $q+1$ points of $\l_i$.  By the properties of the projective plane, for each pair $i,j\in [n]$, $|V(l_i)\cap V(l_j)| =1$ and $\{E(l_1), \dots, E(l_n)\}$ is a partition of the edges of $K_n$. Therefore, when we say that a graph $G$ isomorphic to $K_n$ is a \emph{representation of the projective plane} $\Pi_q$ means that $V(G)$ is identified with the  points of $\Pi_q$ and that there is a set of subgraphs (lines) $\{l_1, \dots, l_n\}$ of $G$, for which a line $\l_i$ of $\Pi_q$, $l_i$ is the induced subgraph by the set of points  of $\l_i$. 

Let us recall that any complete graph of even order $q+1$ admits a factorization by $\frac{q+1}{2}$ hamiltonian paths, see \cite{MR0411988}. This factorization of the complete graph $K_{q+1}$ can be used as an edge-coloring for the lines of $K_n$.

In particular, if $q$ is a prime power there exists a $\Pi_q$, that arises from finite fields $\mathbb{Z}_q$ for $q$ prime, and from $GF(q)$, the Galois Field of order $q$, when $q$ is a prime power. It is called the \emph{algebraic projective plane}, and  it is denoted by $PG(2,q)$ (see \cite{MR554919}).  Since the proof of Theorem \ref{teo4} only requires projective planes of order prime, we will use the algebraic projective plane $PG(2,q)$ for $q$ a prime number. 

Now we give a useful description $PG(2,q)$: Let $P$ and $L$ be two incident point and line that we will call the infinity point and line, respectively. Let $\{P_0,P_1,\ldots,P_{q-1}\}$ be the set of points such that each point is different to $P$ and incident to $L$. And let $\{L_0,L_1,\ldots,L_{q-1}\}$ be the set of lines such that each line is different to $L$ and incident to $P$. 

Moreover, let $\{(i,0),(i,1),\ldots,(i,q-1)\}$ be the set of points, different to $P$, incident to $L_i$; and let $\{[i,0],[i,1],\ldots,[i,q-1]\}$ be the set of lines, different to $L$, incident to $P_i$. 

The remaining lines are denoted as follows. The line $[a,b]$ is adjacent to all the points $(x,y)$ that satisfy $y=ax+b$ using the arithmetic of $\mathbb{Z}_q$. 

\begin{theorem}\label{teo2}
If $q$ is an odd prime number and $n=q^2+q+1$ then \[ \frac{q+1}{4}(n+1)\leq A(K_n).\]
\end{theorem}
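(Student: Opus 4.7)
The plan is to use the algebraic projective plane $PG(2,q)$ to color the edges of $K_n$. First I would identify $V(K_n)$ with the point set of $PG(2,q)$, so that its $n$ lines partition $E(K_n)$ into $n$ edge-disjoint subgraphs, each isomorphic to $K_{q+1}$. Since $q$ is odd, $q+1$ is even, and the cited factorization yields, for each line $\ell$, a decomposition of $\ell$'s $K_{q+1}$ into $(q+1)/2$ edge-disjoint Hamiltonian paths. This produces a base partition of $E(K_n)$ into $(q+1)n/2$ Hamiltonian paths, each contained in a single line.

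The proposed coloring keeps the $(q+1)/2$ Hamiltonian paths of the infinity line $L$ as singleton color classes and groups the remaining $(q+1)(n-1)/2$ Hamiltonian paths (those from non-$L$ lines) into $(q+1)(n-1)/4$ pairs, each pair becoming one color class. Since any two lines of $PG(2,q)$ meet in exactly one point, two Hamiltonian paths from distinct lines share exactly one vertex, so each paired class is a tree and every class is acyclic. The total number of colors is $\frac{q+1}{2}+\frac{(q+1)(n-1)}{4}=\frac{(q+1)(n+1)}{4}$, matching the claimed bound.

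Next I would verify that any two color classes together contain a cycle, by case analysis. (i) Two $L$-singletons give two edge-disjoint Hamiltonian paths of the same $K_{q+1}$, whose $2q$ edges on $q+1$ vertices force a cycle. (ii) An $L$-singleton and a paired class $\{H_{\ell_a},H_{\ell_b}\}$, with $\ell_a$ and $\ell_b$ in distinct parallel classes, yield a $4$-cycle through $L\cap\ell_a$, $L\cap\ell_b$, and $\ell_a\cap\ell_b$. (iii) Two paired classes sharing a common line $\ell$ contain two distinct Hamiltonian paths of $\ell$ and so a cycle inside $\ell$. (iv) Two paired classes with four pairwise distinct lines $\ell_a,\ell_b,\ell_c,\ell_d$ yield a cycle of length at most four through the intersection points $\ell_a\cap\ell_c$, $\ell_a\cap\ell_d$, $\ell_b\cap\ell_d$, $\ell_b\cap\ell_c$, as long as the four lines are not all concurrent at a single point.

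The main obstacle is designing the pairing of non-$L$ Hamiltonian paths so as to sidestep the degenerate configurations in cases (ii) and (iv): (a) each pair must use lines from two different parallel classes so that its base point $\ell_a\cap\ell_b$ is a non-infinity point, which is handled simply by forbidding pairs within a parallel class; and (b) no two paired classes with disjoint line sets may have their four lines concurrent at a single point, equivalently, at every non-infinity point $p$ the paired classes based at $p$ must pairwise share a line, for instance by forming a star centered at some fixed line $\ell^{\ast}_p$ through $p$. A counting check shows this is feasible, since the $q^2$ non-infinity points can host up to $q^2(q+1)/2$ paired classes in star configurations, comfortably exceeding the required $(q+1)(n-1)/4$. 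The explicit construction of such a pairing exploits the arithmetic of $\mathbb{Z}_q$, available because $q$ is prime.
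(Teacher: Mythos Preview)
Your high-level strategy matches the paper's exactly: identify $V(K_n)$ with $PG(2,q)$, split each line's $K_{q+1}$ into $(q+1)/2$ Hamiltonian paths, keep one line's paths as singleton classes, and pair the remaining paths so that each class is a tree and any two classes produce a cycle. Your case analysis (i)--(iv) is also essentially the paper's (with the minor caveat that the cycles you find are not literally of length $4$; they are cycles built from three or four subpaths of Hamiltonian paths).

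The genuine gap is the existence of the pairing. Your ``counting check'' only shows there is capacity; it does not prove that a pairing exists meeting all three constraints simultaneously (every non-$L$ Hamiltonian path used exactly once; no pair within a parallel class; at every base point the pairs form a star). A capacity bound is far from a feasibility proof here, because the per-line budget of $(q+1)/2$ paths and the per-point star requirement interact globally. You defer this to ``the arithmetic of $\mathbb{Z}_q$'' without saying what the construction is; but this construction is precisely the substance of the proof.

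The paper resolves this by a simplification you do not make: it pairs \emph{lines}, not individual Hamiltonian paths. Concretely, it partitions the $n-1$ lines other than a fixed leftover line $L_0$ into $(n-1)/2$ triplets $(p,\ell,m)$ with $p=\ell\cap m$, all base points $p$ pairwise distinct and off $L_0$; within each triplet it then matches the $(q+1)/2$ Hamiltonian paths of $\ell$ with those of $m$. With this setup your conditions (a) and (b) are automatic: distinct base points force any two triplets to have non-concurrent lines (handling case (iv)), and ``$p\notin L_0$'' makes $L_0,\ell,m$ a genuine triangle (handling case (ii)). The bulk of the paper's argument is an explicit coordinate construction of these $(n-1)/2$ triplets in $PG(2,q)$, verifying that every line except $L_0$ is used exactly once and that the base points are distinct affine points. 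That explicit step is what your plan is missing; once you commit to pairing lines with distinct base points, carrying it out is the real work.
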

\begin{proof}
Let $q$ be an odd prime number and $G$ (isomorphic to $K_n$) be a representation of the algebraic projective plane $PG(2,q)$.

First, we proceed giving a partition of the lines of the projective plane taking a single line and $\frac{q^2+q}{2}$ pairs of lines having an intersection point between them which is different for each pair, that is a triplet $(p,m,l)$ such that $p=m\cap l$.  And second, we give an acyclic edge-coloring of $G$ that uses this partition of the lines and attains the given lower bound concluding the proof as follows. 

\begin{itemize}
\item [(1)] The single line is $L_0$, and the first triplet is  $(P_0,[0,0],L)$.
\item [(2)] Take the set $X=\{P_1,\ldots ,P_{\frac{q-1}{2}}\}$, a subset of $\frac{q-1}{2}$ points of $L$, and call $A$ the set of lines between the point $(0,0)$ and $X$, that is $A=\{[i,0]: i\in \{1,\ldots, \frac{q-1}{2}\}\}$, and let $B$ a set of lines between the points of $X$ and the point $(0,2i-1)$, that is $B=\{[i,2i-1]:  i\in \{1,\ldots, \frac{q-1}{2}\}\}.$  
Then, we have a set $T_1$ of $\frac{q-1}{2}$ triplets of two lines and one intersection point between them: 
\[T_1=\{(P_i,[i,0],[i,2i-1])\colon  i\in \{1,\ldots, \frac{q-1}{2}\}\}.\]
\item[(3)] Take the intersection points between the lines $[i,2i-1]$ and $L_i$, such that:
\[(i,i^2+2i-1)=[i,2i-1]\cap L_i, \ \text{for}  \  i\in \{1,\ldots, \frac{q-1}{2}\}.\]
And the set of lines between these points $(i,i^2+2i-1)$ and $(0,2i)$, they are the lines $C=\{[\frac{i^2-1}{i},2i]:  i\in \{1,\ldots, \frac{q-1}{2}\}\}.$ Then, we have another set of of $\frac{q-1}{2}$ triplets of two lines and one intersection point between them: 
\[T_2=\{(i,i^2+2i-1),L_i,[\frac{i^2-1}{i},2i]\colon  i\in \{1,\ldots, \frac{q-1}{2}\}\}.\]
Note that we have covered all the lines $[i,2i-1]$ (containing the points $(0,2i-1)$ and $(i,i^2+2i-1)$) and all the lines $[\frac{i^2-1}{i},2i]$ (containing the points $(0,2i)$ and $(i,i^2+2i-1)$) for $1\leq i \leq \frac{q-1}{2}$. 
\item[(4)] For any vertex $(i,i^2+2i-t)\in L_i$ with $t\in \mathbb{Z}_q-\{1\}$, we have the lines $\{[\frac{i^2-t+1}{i},2i-1]\}$ between $(i,i^2+2i-t)$ and $(0,2i-1)$,  and the lines $\{[\frac{i^2-t}{i},2i]\}$ between $(i,i^2+2i-t)$ and $(0,2i)$ with $t\in \mathbb{Z}_q-\{1\}$. 

Now we define $(q-1)\frac{q-1}{2}$ triplets as follows: 
\[T_3=\{((i,i^2+2i-t),[\frac{i^2-t+1}{i},2i-1],[\frac{i^2-t}{i},2i])\colon \ t\ \in \mathbb{Z}_q-\{1\}, \text{and}\  i\in \{1,\ldots, \frac{q-1}{2}\}\}.\]

\item[(5)] The last set of triplets is defined by: 
\[T_4=\{((i,i^2),[i,0],L_i)\colon i\in \{\frac{q+1}{2},  \dots, q-1\}\}.\]

\item[(6)] Hence, we have: $1+|T_1|+|T_2|+|T_3|+|T_4|=1+\frac{q-1}{2}+\frac{q-1}{2}+(q-1)\frac{q-1}{2}+\frac{q-1}{2}$ triplets that cover a set of $2+2(q-1)+(q-1)^2+q-1=q^2+q$ lines. Then, all the lines of $PG(2,q)$ are covered except for the line $L_0$. 

The left side of Figure \ref{Fig2} shows a decomposition of the lines of $PG(2,3)$ while the right side shows $K_{13}$.   
\end{itemize}

\begin{figure}[!htbp]
\begin{center}
\includegraphics{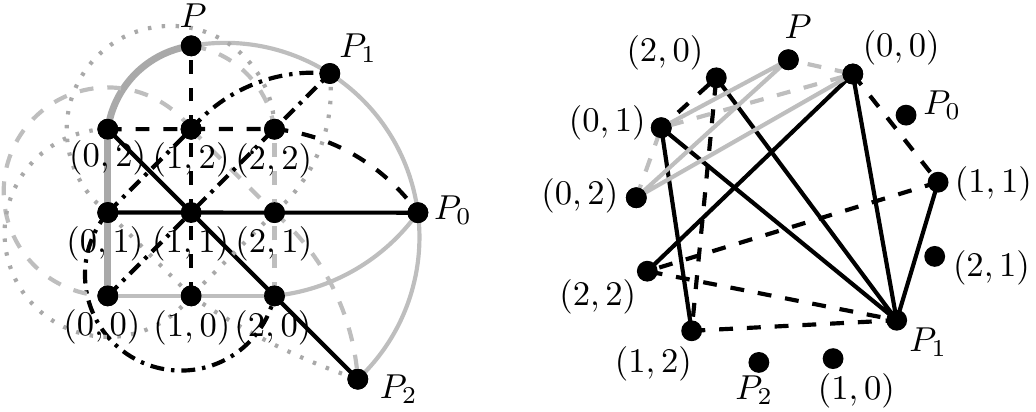}
\caption{Descomposition of $PG(2,3)$ by triplets and the complete coloring for $K_{13}$.}\label{Fig2}
\end{center}
\end{figure}

Now, we proceed to color the complete graph $G$.

To begin with, we color the complete subgraph $K_{q+1}$ associated with the line $L$ by Hamiltonian paths, therefore we use $\frac{q+1}{2}$ colors. The remaining lines are colored by pairs, according to the triplets $(p,l,m)$.

For each triplet $(p,l,m)$, we color the complete subgraph $K_{q+1}$ associated with the line $l$ by Hamiltonian paths and we copy the coloring to the complete subgraph associated with the line $m$.

Therefore, we use $\frac{q+1}{2}(\frac{n-1}{2}+1)=\frac{q+1}{4}(n+1)$ colors. The coloring is acyclic because the color classes of the line $L$ are paths, ante each color class of a triplet is the identification of two paths by a vertex.

Now, if two color classes are in the edges of a line, clearly they contain a cycle since they are the union of two Hamiltonian paths. If a color class is the the edges of $L$ and another color class is in the edges of the lines of a triplet $(p,l,m)$, the triangle formed by $l$, $m$ and $L$ induces a cycle. 

Finally, if two color classes are in the edges of the lines of the triplets $(p,l,m)$ and $(p',l',m')$, since $p\not= p'$, the triangle formed by $l$, $m$ and $l'$ induces a cycle, and then the union of any two color classes induce at least a cycle. 

The right side of Figure \ref{Fig2} shows the part of $K_{13}$ where the line $L_0$ and the two lines of the triplet $(P_1,[1,0],[1,1])$ are colored. 
\end{proof}

\section{Main Result}\label{s4}

Before proving Theorem \ref{teo4} we state the following lemma.

\begin{lemma} \label{lema3}
For any graph $G$, if $H$ is a subgraph of $G$, then $A_{\alpha}(G)\geq A_{\alpha}(H)$.
\begin{proof}
Given a coloring $\varsigma$ of $H$ which performs $A_{\alpha}(H)$, we extend this to an acyclic coloring of $G$ such that any two of color classes have a cycle in a greedy way, that is, the edges of $E(G)\setminus E(H)$ are listed in some specified order, then we assign to the edge under consideration the smallest
available color preserving the properties of the coloring. 
\end{proof}
\end{lemma}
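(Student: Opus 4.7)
The plan is to extend an optimal coloring of $H$ to a coloring of $G$ one edge at a time, exploiting the fact that the bichromatic-cycle condition is monotone in the edge sets of the color classes. Concretely, I fix an edge-coloring $\varsigma$ of $H$ achieving $A_{\alpha}(H)=k$ colors, order the edges of $E(G)\setminus E(H)$ arbitrarily, and process them in turn while maintaining the following invariant on the already-colored portion of $G$: every color class is a forest and every pair of color classes together contains a cycle.

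For a new edge $e=uv$, I check whether some current color $c$ has $u$ and $v$ in distinct components of its forest $F_c$. If yes, I assign $e$ the color $c$; this keeps $F_c$ acyclic, and because every pairwise union already contained a cycle and only gains edges, the bichromatic-cycle property is preserved. If no such $c$ exists, then every current class $F_c$ already contains a $u$--$v$ path, and I instead introduce a brand-new color whose class is $\{e\}$. The singleton class is trivially a forest, and for any other class $F_c$ the union $F_c\cup\{e\}$ contains a cycle precisely because $F_c$ carried a $u$--$v$ path. Thus both invariants survive, and the total number of colors only grows.

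Once every edge of $E(G)\setminus E(H)$ has been processed, every edge of $G$ is colored, each class is a forest, every two classes contain a cycle, and the number of colors used is at least $k=A_{\alpha}(H)$, yielding the stated inequality. The principal (and quite mild) obstacle is noticing that the naive greedy can fail to find a reusable color on some edge; the clean rescue is to spend a fresh color there, which is costless because the very failure to reuse a color is exactly the $u$--$v$-connectivity condition that certifies the new singleton class forms a bichromatic cycle with each pre-existing class.
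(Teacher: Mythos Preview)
Your proof is correct and follows essentially the same greedy-extension strategy as the paper's own proof; you are simply more explicit about the one nontrivial point, namely what to do when no existing color can absorb the new edge $uv$ without closing a cycle, and you correctly observe that precisely in that case every class already contains a $u$--$v$ path, so a fresh singleton color satisfies the bichromatic-cycle condition with all others.
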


Now we have our main result.

\begin{theorem}\label{teo4}
Let $n\geq 13$ be an integer then the achromatic arboricity of the complete graph of order $n$ is bounding by:
\[\frac{1}{4}n^{\frac{3}{2}}-\Theta(n) \leq A_{\alpha}(K_{n})\leq \frac{1}{\sqrt{2}}n^{\frac{3}{2}}-\Theta(n).\]
\end{theorem}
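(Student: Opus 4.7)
The upper bound is immediate: Theorem \ref{teo1} already establishes $A_\alpha(K_n) \leq \frac{1}{\sqrt{2}}n^{3/2} - \Theta(n)$ for all $n \geq 5$, hence in particular for $n \geq 13$. So the only remaining task is the lower bound, and the plan is to transfer the sharp bound obtained on the ``projective-plane values'' $q^2+q+1$ in Theorem \ref{teo2} to arbitrary $n \geq 13$ via the monotonicity statement of Lemma \ref{lema3}.

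Given $n \geq 13$, I would select the largest odd prime $q$ such that $m := q^2 + q + 1 \leq n$. Such a choice exists because $q = 3$ yields $m = 13 \leq n$. Since $K_m$ sits inside $K_n$ as a subgraph, Lemma \ref{lema3} gives $A_\alpha(K_n) \geq A_\alpha(K_m)$, and Theorem \ref{teo2} applied to $m$ yields
\[
A_\alpha(K_n) \;\geq\; A_\alpha(K_m) \;\geq\; \frac{q+1}{4}(m+1) \;=\; \frac{1}{4}\bigl(q^3 + 2q^2 + 3q + 2\bigr).
\]

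It remains to re-express this bound in terms of $n$. Writing $q = \sqrt{n} - \delta$ with $\delta \geq 0$, the expansion $q^3 = n^{3/2} - 3\delta n + 3\delta^2 \sqrt{n} - \delta^3$ shows that $\tfrac{1}{4}q^3 = \tfrac{1}{4}n^{3/2} - \Theta(n)$ provided $\delta$ is bounded above by a constant (and, more generally, that any $\delta = O(1)$ contribution is absorbed by the $\Theta(n)$ slack). The remaining terms $2q^2 + 3q + 2 = 2n + O(\sqrt{n})$ are already of order $n$ and so contribute only to the $\Theta(n)$ correction.

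The main obstacle is therefore purely number-theoretic: one must ensure that the largest odd prime $q$ with $q^2+q+1 \leq n$ lies within a bounded distance of $\sqrt{n}$, so that the cubic error $3\delta n$ fits inside $\Theta(n)$. For this I would invoke Bertrand's postulate, which guarantees a prime in $(\sqrt{n}/2, \sqrt{n}]$, or, for a tighter control, a sharper prime-gap estimate such as Ingham's bound $\sqrt{n} - q = O(n^{5/16})$. Once this gap bound is in hand, substituting into the displayed inequality and collecting terms of order at most $n$ into a single $\Theta(n)$ gives the required lower bound $\tfrac{1}{4}n^{3/2} - \Theta(n)$, completing the proof.
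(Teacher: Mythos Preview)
Your overall strategy---cite Theorem~\ref{teo1} for the upper bound, and for the lower bound choose a prime $q$ with $q^2+q+1\le n$, combine Theorem~\ref{teo2} with the monotonicity of Lemma~\ref{lema3}, and then control $\sqrt{n}-q$ via a prime-gap estimate---is exactly the paper's approach.

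The gap is in the number-theoretic step. You correctly note that absorbing $3\delta n$ into a genuine $\Theta(n)$ would require $\delta=O(1)$, but the tools you then invoke do not give this. Ordinary Bertrand only guarantees a prime $q>\sqrt{n}/2$, so $\delta$ can be as large as $\sqrt{n}/2$; substituting gives $\tfrac14 q^3\ge\tfrac{1}{32}n^{3/2}-O(n)$, which destroys the leading constant $\tfrac14$ altogether. Ingham's bound $\delta=O(n^{5/16})$ does recover the constant $\tfrac14$, but the error term is then $O(n^{21/16})$, not $O(n)$. The paper instead uses the consequence of the Prime Number Theorem that for every $\epsilon>0$ there is eventually a prime in $[x,(1+\epsilon)x]$: taking $x=\sqrt{n}/(1+\epsilon)-1$ yields a prime $q$ with $q^2+q+1\le n$ and $q\ge\sqrt{n}/(1+\epsilon)-1$, hence $A_\alpha(K_n)\ge\dfrac{n^{3/2}}{4(1+\epsilon)^3}-O_\epsilon(n)$, and one lets $\epsilon\to 0$. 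Note that this, too, does not literally produce a $\Theta(n)$ error term (that would require $\delta=O(1)$, i.e.\ bounded prime gaps near $\sqrt{n}$, which is not known); the $\Theta(n)$ in the statement should be read informally as ``lower-order'', and both the paper's $\epsilon$-argument and your Ingham alternative establish the intended bound $(\tfrac14-o(1))\,n^{3/2}$.
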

\begin{proof}
The upper bound is given in theorem \ref{teo1}. To prove the lower bound, we uses a strengthened version of Bertrand's Postulate, which follows from the Prime Number Theorem, see \cite{MR506522,MR0258720}: For $\epsilon>0$, there exists an $N_\epsilon$, such that for all real $x\geq N_\epsilon$ there exists a prime $q$ between $x$ and $(1+\epsilon)x$. Let $\epsilon>0$ be given, and suppose $n>(N_\epsilon+1)^2(1+\epsilon)^2$. Let $x=\sqrt{n}/(1+\epsilon) -1$, so $x\geq N_\epsilon$. We now select a prime $q$ with $x\leq q \leq (1+\epsilon)x$. Then $q^2+q+1\leq (x+1)^2(1+\epsilon)^2=n$. Since projective planes of all prime orders exist it follows from Theorem \ref{teo2} and Lemma \ref{lema3} that: \[A(K_n)\geq A(K_{q^2+q+1})\geq \frac{q+1}{4}(q^2+q+2)=\frac{n^{3/2}}{4(1+\epsilon)^3}-\frac{n}{4(1+\epsilon)^2}+\frac{\sqrt{n}}{2(1+\epsilon)}.\]
Since $\epsilon$ was arbitrarily small the result follows.
\end{proof}
 
\section{The achromatic arboricity of $K_n$ for small values of $n$.}\label{smallvalues}

In this section we study the achromatic arboricity of $K_n$ for small values of $n$. Table \ref{Tab1} shows the exact values for $2\leq n\leq 7$,  Figure \ref{Fig3} displays colorations that attain lower bounds for $2\leq n\leq 7$ and Table \ref{Tab2} shows upper and lower bounds for $8\leq n\leq 12$.

\begin{table}[!htbp]
\begin{center}
\begin{tabular}{|c|cccccc|}
\hline \hline
$n$ & 2 & 3 & 4 & 5 & 6 & 7 \\
\hline
$A_{\alpha}(K_n)$ & 1 & 2 & 3 & 4 & 6 & 7 \\
\hline
\end{tabular}
\caption{\label{Tab1}Exact values for $A_{\alpha}(K_n)$, $2\leq n\leq 7.$}
\end{center}
\end{table}

It is easy to see that the upper bounds for $n=2,3$ equals the values of Table \ref{Tab1}. 

\begin{figure}[!htbp]
\begin{center}
\includegraphics{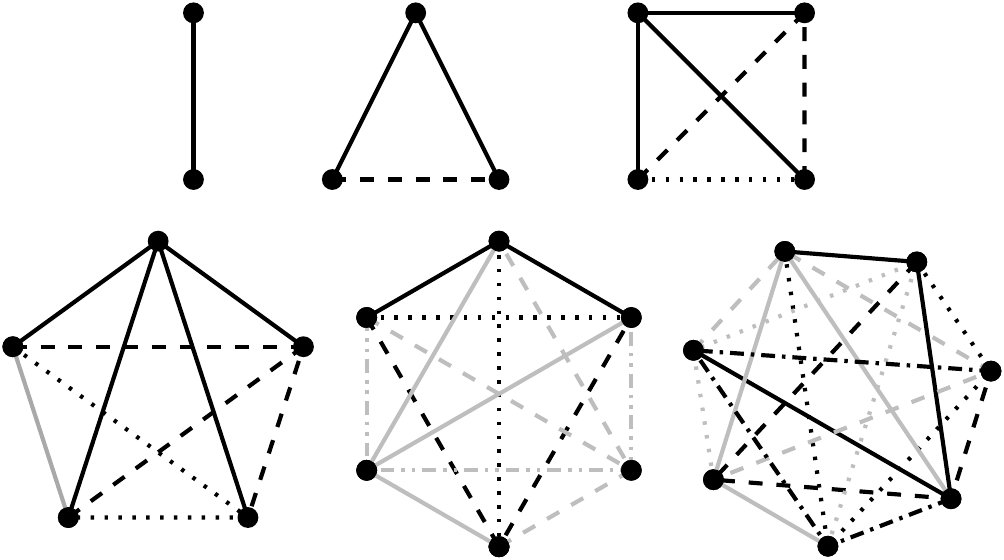}
\caption{Colorings that give lower bounds of $A_\alpha(K_n)$ for $2\leq n\leq 7$.}\label{Fig3}
\end{center}
\end{figure}

For the case $n=4$, if we suppose $A_{\alpha}(K_4)\geq 4$, then there are at least two color classes of size one, whose union does not contain a cycle, a contradiction, then  $A_{\alpha}(K_4)=3$.

For the case of $n=5$,  Lemma \ref{lema1} says that $A_{\alpha}(K_5)\leq 5$ and the smallest color class has two edges. We suppose $A_{\alpha}(K_5)= 5$ and then each color class has exactly two edges. If the edges of a color class induce a $P_3$, say $abc$,  the edge $ac$ is in the cycle with vertices $a$, $b$ and $c$, so, the others six edges incident to $P_3$ generate 3 different color classes necessarily (each on this color classes has to be connected). Hence, the edge $ac$ is in a color class which is a matching, say $ac$ and $de$. Take the color class containing the edge $bd$. On one hand, the color class is the set of two edges $bd$ and $bc$ (or $bd$ and $ab$) making a cycle with $abd$.  On the other hand, the color class is the set of two edges $bd$ and $be$ making a cycle with $ac$ and $be$, a contradiction. 
Therefore, each color class is a matching of size two.  Then the union of two color classes is a $C_4$ and the union of three color classes is a $K_4$ necessarily. The remaining edges generate a $K_{1,4}$ which does not contain matchings of size two.  In consequence,  $A_{\alpha}(K_5)= 4$. 

For the case of $n=6$,   Lemma \ref{lema1} says that $A_{\alpha}(K_6)\leq 7$ and the smallest color class has two edges. If we suppose $A_{\alpha}(K_6)= 7$, then there are six color classes of size two and one color class of size three.  If the edges of a color class of size two induce a $P_3$,  it would be incident to at most $5$ more color classes, then each color class of size two has to be a matching.  Then the union of two color classes of size two is a $C_4$ and the union of three color classes of size two is a $K_4$ necessarily, a contradiction because there are at least six color classes of size two.  Therefore,  $A_{\alpha}(K_6)= 6$. 

For the case of $n=7$, Lemma \ref{lema1} says that $A_\alpha(K_7)\leq 9$ and the smallest color class has two edges.  If we suppose $A_\alpha(K_7)= 9$,  there is at least six color classes of size two, however, a $P_3$ is incident to at most six color classes, hence, the color classes of size two are matchings but only there are at most three of them (on a $K_4$), a contradiction.  If we suppose $A_\alpha(K_7)= 8$,  there are three color classes of size two which are matching in a $K_4$ subgraph and there are five color classes of size three. On one hand, each color class of three vertices must be incident to three vertices of the $K_4$. On the other hand, there are at most four color classes of size three incident to three vertices of the $K_4$, a contradiction and then $A_\alpha(K_7)= 7$.

To end, we calculate the upper bounds given in Lemma \ref{lema1} for $A_\alpha (K_n)$ for $8 \leq n \leq 12$, and the lower bounds perform their values via greedy colorings.
\begin{table}[!htbp]
\begin{center}
\begin{tabular}{|c|ccccc|}
\hline \hline
$n$ & 8 & 9 & 10 & 11 & 12 \\
\hline
Upper bound & 11 & 13 & 15 & 18 & 22 \\
\hline
Lower bound & 8 & 9 & 10 & 11 & 12 \\
\hline
\end{tabular}
\caption{\label{Tab2}Bounds for $A_{\alpha}(K_n)$, $8\leq n\leq 12.$}
\end{center}
\end{table}

\end{document}